\theoremstyle{plain}
\newtheorem{thm}{Theorem}[section]
\newtheorem{prop}[thm]{Proposition}
\newtheorem*{definition}{Definition}
\DeclareMathOperator{\crank}{crank}
\begin{document}

\title{Refining Blecher and Knopfmacher's Integer Partition Fixed Points}

\author{Brian Hopkins}\address{Saint Peter's University, Jersey City NJ, USA}\email{bhopkins@saintpeters.edu}

\maketitle

\begin{abstract}
Recently, Blecher and Knopfmacher explored the notion of fixed points in integer partitions.  Here, we distinguish partitions with a fixed point by which value is fixed and analyze the resulting triangle of integers.  In particular, we confirm various identities for diagonal sums, row sums, and antidiagonal sums (which are finite for this triangle) and establish a four-term recurrence for triangle entries analogous to Pascal's lemma for the triangle of binomial coefficients.  The partition statistics crank and mex arise.  All proofs are combinatorial.
\end{abstract}

\section{Introduction and statement of primary results}

Given a positive integer $n$, a partition of $n$ is a collection of positive integers $\lambda = (\lambda_1, \ldots, \lambda_j)$ with $\sum \lambda_i = n$.  The $\lambda_i$, called parts, are ordered so that $\lambda_1 \ge \cdots \ge \lambda_j$.  We use the notation $\lambda \vdash n$ and $|\lambda| = n$ to indicate that $\lambda$ is a partition of $n$.  Write $P(n)$ for the set of partitions of $n$ and $p(n) = \#P(n)$ for the number of partitions of $n$.  Recall the convention that $p(0) = 1$.

Fixed points are a pervasive concept in mathematics, including the study of permutations.  Recently, Blecher and Knopfmacher brought the idea of fixed points to integer partitions \cite[Section 4]{bk}.

\begin{definition} \label{fp}
A partition $\lambda$ has a fixed point if there is an index $i$ for which $\lambda_i = i$.
\end{definition}

For example, among the partitions of 5, $\alpha=(3,2)$ has a fixed point since $\alpha_2 = 2$ while $\beta = (3,1,1)$ has no fixed points.  With the convention of writing parts in nonincreasing order, it is clear that a partition has at most one fixed point.

Here we consider a refinement of the fixed point.
\begin{definition} \label{fp}
Given $n \ge 1$, let $F(n,d)$ be the partitions of $n$ with fixed point $d$ and $f(n,d) = \#F(n,d)$ the number of these partitions.
\end{definition} 

For example, $F(5,1) = \{(1,1,1,1,1)\}$ (which we sometimes write as $1^5$) and $F(5,2) = \{(3,2), (2,2,1)\}$ account for all the partitions of 5 with a fixed point, so that $f(5,1) = 1$, $f(5,2) = 2$, and $f(5,k) = 0$ for all $k \ge 3$.

The irregular triangle of integers $f(n,d)$, shown in Figure \ref{fig1}, has an internal recurrence similar to Pascal's lemma for the triangle of binomial coefficients and several sum identities.  In particular, we establish the following four results.

\begin{figure}
\begin{tabular}{r|rrrr}
$n \backslash d$ & \; 1 & 2 & 3 & 4 \\ \hline
1 & 1 & & &  \\
2 & 1 & & &  \\
3 & 1 & & &   \\
4 & 1 & 1 & &  \\
5 & 1 & 2 & &  \\
6 & 1 & 4 & &  \\
7 & 1 & 6 & &  \\
8 & 1 & 9 & &  \\
9 & 1 & 12 & 1 & \\
10 & 1 & 16 & 2 &  \\
11 & 1 & 20 & 5 &  \\
12 & 1 & 25 & 9 &  \\
13 & 1 & 30 & 16 &  \\
14 & 1 & 36 & 25 &  \\
15 & 1 & 42 & 39 &  \\
16 & 1 & 49 & 56 & 1  \\
17 & 1 & 56 & 80 & 2 \\
18 & 1 & 64 & 109 & 5 \\
19 & 1 & 72 & 147 & 10 \\
20 & 1 & 81 & 192 & 19
\end{tabular}
\caption{Triangle of $f(n,d)$ values for $n \le 20$.} \label{fig1}
\end{figure}

In Proposition \ref{colrecur} below, we will show that column $d$ of the triangle satisfies a degree $d^2$ recurrence.  However, using terms of column $d-1$ allows the following recurrence which is linear in $d$.

\begin{thm} \label{fixedPas}
For $d \ge 2$ and $n \ge d^2$,
\[f(n,d) = f(n-d+1,d) + f(n-d,d) - f(n-2d + 1, d) + f(n-2d+1,d-1).\]
\end{thm}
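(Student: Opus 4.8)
The plan is to read the recurrence as an inclusion--exclusion on the set $F(n,d)$ for two natural events, identifying each of the four resulting counts with one term of the right-hand side by an explicit bijection. First I would record the structure forced by a fixed point at $d$: since $\lambda_1 \ge \cdots \ge \lambda_{d-1} \ge \lambda_d = d \ge \lambda_{d+1} \ge \cdots$, every $\lambda \in F(n,d)$ splits into a \emph{top} of $d-1$ parts each $\ge d$, the fixed part $d$, and a \emph{bottom} of parts each $\le d$. The minimal such partition is $d$ copies of $d$, which accounts for the hypothesis $n \ge d^2$.

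On $F(n,d)$ I would consider the event $P$ that every top part strictly exceeds $d$ (equivalently $\lambda_{d-1} \ge d+1$) and the event $Q$ that the bottom contains a part equal to $d$ (equivalently $\lambda_{d+1} = d$); these mirror the factors of $(1-q^{d-1})(1-q^d)$ hidden in the identity. Inclusion--exclusion gives $f(n,d) = \#P + \#Q - \#(P \cap Q) + \#(P^c \cap Q^c)$, so it suffices to match these counts with $f(n-d+1,d)$, $f(n-d,d)$, $f(n-2d+1,d)$, and $f(n-2d+1,d-1)$. For $P$, subtracting $1$ from each of the first $d-1$ parts is a bijection onto $F(n-d+1,d)$ (the top parts stay $\ge d$, so the fixed point is unchanged), with inverse adding $1$ to each top part. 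For $Q$, deleting one bottom part equal to $d$ is a bijection onto $F(n-d,d)$, with inverse inserting a part $d$ just below the fixed part. Since $P$ constrains positions $\le d-1$ while $Q$ constrains positions $\ge d+1$, the two operations act on disjoint regions and compose to a bijection $P \cap Q \to F(n-2d+1,d)$.

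The substantive step, where I expect the real work to lie, is the fourth bijection $P^c \cap Q^c \to F(n-2d+1,d-1)$; this is where column $d-1$ enters. Here $\lambda_{d-1} = d = \lambda_d$ and $\lambda_{d+1} \le d-1$, and I would send $\lambda$ to the partition $\lambda'$ obtained by subtracting $1$ from each of the first $d-1$ parts \emph{and} deleting the fixed part $\lambda_d = d$, a total reduction of $2d-1$. The key point is that $\lambda'_{d-1} = d-1$, so $\lambda'$ has fixed point exactly $d-1$; the inverse adds $1$ to each of the first $d-1$ parts and reinserts a part $d$ at position $d$. The main obstacle is checking that this map is well defined and bijective: that $\lambda'$ is still nonincreasing at the junction (the new value $\lambda'_{d-1} = d-1 \ge \lambda_{d+1}$ by $Q^c$), and that the relocated fixed point lands at $d-1$ and nowhere else. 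This last verification is exactly what forces the hypothesis $\lambda_{d-1} = d$, that is $P^c$, since $\lambda_{d-1} \ge d+1$ would leave position $d-1$ strictly above $d-1$ and destroy the fixed point. Assembling the four bijections into the inclusion--exclusion identity then yields the theorem, with $n \ge d^2$ guaranteeing that each target set is in range.
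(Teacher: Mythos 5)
Your proof is correct and is essentially the paper's argument in different packaging: the paper proves the identity as a bijection $F(n,d) \cup F(n-2d+1,d) \cong F(n-d+1,d) \cup F(n-d,d) \cup F(n-2d+1,d-1)$ built from exactly the maps you describe (decrementing the first $d-1$ parts, deleting/inserting a part $d$, and your fourth map for the case $\lambda_{d-1}=d$, $\lambda_{d+1}<d$), with its case analysis on $\lambda_{d-1}$ and $\lambda_{d+1}$ matching your events $P$ and $Q$. Your inclusion--exclusion bookkeeping simply moves the negatively counted set $F(n-2d+1,d)$ to the other side of the bijection, so the combinatorial content is the same.
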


There are several sums in the $f(n,d)$ triangle that connect to other partitions and partition statistics.  Next, we recall the relevant definitions.

A partition $\lambda$ can be represented as a Ferrers diagram where row $i$ consists of $\lambda_i$ dots.  The conjugate $\lambda'$ switches the rows and columns of the Ferrers diagram.  The largest square that can be placed in the Ferrers diagram of $\lambda$ is called its Durfee square (see Figures \ref{fig2} and \ref{fig3} below).  Symbolically, the dimension of the Durfee is the greatest index $i$ for which $\lambda_i \ge i$.

The crank statistic was named by Dyson in 1944 who predicted its use in a combinatorial understanding of Ramanujan's modulo 11 congruence \cite{d}.  The ``elusive crank'' was defined by Andrews and Garvan in 1988 \cite{ag} and has become a very important statistic for integer partitions.  To define it for a partition $\lambda$, let $\omega(\lambda)$ be the number of parts 1 in $\lambda$ and $\mu(\lambda)$ the number of parts of $\lambda$ greater than $\omega(\lambda)$.  Then
\[\text{crank}(\lambda) =\begin{cases} \lambda_1 & \text{ if $\omega(\lambda)=0$},\\
\mu(\lambda) - \omega(\lambda) & \text{ if $\omega(\lambda)>0$}.
\end{cases}\]
Write $M(m,n)$ for the number of partitions of $n$ with crank $m$.

The mex (minimal excludant) of a partition $\lambda$ is the smallest positive integer which is not a part of $\lambda$.  See \cite{an,hs20} for connections between the crank and mex statistics.

Related to the diagonal sum result, let $a(n)$ be the number of partitions $\lambda$ of $n$ such that the size of the Durfee square of $\lambda$ is not a part of $\lambda$ \cite[A118199]{o}.

\begin{thm} \label{diag}
For $n \ge 1$, 
\[f(n,1) + f(n-1,2) + \cdots = a(n+1).\]
\end{thm}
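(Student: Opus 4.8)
The plan is to construct an explicit bijection between the set of partitions counted by the diagonal sum on the left and the set of partitions of $n+1$ counted by $a(n+1)$ on the right. The left side collects, over all $d \ge 1$, the partitions of $n - (d-1)$ having fixed point $d$; equivalently, a term $f(n-d+1, d)$ counts partitions $\lambda$ of size $n-d+1$ whose Durfee square has side exactly $d$ (since $\lambda$ has fixed point $d$ means $\lambda_d = d$, and with parts nonincreasing this forces the Durfee side to be $d$). So the left-hand union is the set of all partitions $\mu$ with $|\mu| + (\operatorname{Durfee}(\mu) - 1) = n$, i.e. $|\mu| = n + 1 - \operatorname{Durfee}(\mu)$, carrying the extra constraint that the Durfee-side value $d$ actually occurs as the part $\lambda_d$.

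\smallskip

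First I would set up the target side carefully: $a(n+1)$ counts partitions $\nu$ of $n+1$ whose Durfee side $s$ is \emph{not} a part of $\nu$. The natural map is to take such a $\nu$ and delete one cell so as to \emph{create} a fixed point while recording the deleted amount. Concretely, I would read off the Durfee side $s$ of $\nu$; since $s$ is not a part of $\nu$, either $\nu_s > s$ (the $s$-th part overshoots) or $\nu$ has fewer than $s$ parts equal to at least $s$ in the relevant way, and I expect the construction to reduce the configuration near the main diagonal so that the resulting partition $\mu$ has $\mu_d = d$ for the appropriate $d$. The size drops by exactly $d - 1 = s - 1$, matching the diagonal shift $n + 1 \mapsto n - d + 1$. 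I would verify the map is well defined (the output genuinely has a fixed point $d$, so it lands in some $F(n-d+1, d)$) and then build the inverse: given $\mu$ with fixed point $d$, add back $d - 1$ cells in a canonical way along the diagonal to produce a partition of $n+1$ whose Durfee side is $d$ but with $d$ absent as a part.

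\smallskip

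The cleanest route is probably to argue at the level of Durfee decompositions. Any partition $\mu$ with Durfee side $d$ splits into the $d \times d$ square, a partition $\alpha$ to the right (parts $\le$ the arm lengths, at most $d$ rows) and a partition $\beta$ below (the conjugate structure, parts $\le d$). The fixed-point condition $\mu_d = d$ says the $d$-th arm is empty, i.e. $\alpha$ has at most $d-1$ parts. I would translate the ``$s$ is not a part'' condition on $\nu$ into an analogous statement about its Durfee decomposition and then match the two decompositions up, with the size difference $s-1$ absorbed by a controlled modification of $\alpha$ or $\beta$ (for instance, adjoining a specific hook or shifting a single row). Checking that this modification is reversible and respects both the nonincreasing order and the Durfee-side value is the crux.

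\smallskip

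The main obstacle will be pinning down exactly how ``the Durfee side is not a part'' converts into the existence of a fixed point after removing $d-1$ cells, and ensuring the removal/addition is uniquely determined rather than requiring a choice. In the boundary regime (small $n$, or partitions where the Durfee square is nearly self-conjugate) I expect degenerate cases where $\alpha$ or $\beta$ is empty, and these will need to be handled separately to confirm the bijection is total and injective in both directions. I would validate the construction against the table in Figure~\ref{fig1} — e.g. checking that the $n=4$ diagonal sum $f(4,1)+f(3,2)+\cdots = 1$ equals $a(5)$ — before committing to the general argument.
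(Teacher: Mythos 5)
Your proposal is a plan rather than a proof, and the plan as written has two concrete problems. The first is an arithmetic slip: a partition counted by $a(n+1)$ has size $n+1$, while a partition in $F(n-d+1,d)$ has size $n-d+1$, so any bijection must change the size by exactly $d$ cells, not the $d-1$ cells you assert (``the size drops by exactly $d-1=s-1$''). The second is the genuine gap: the step you yourself flag as ``the crux'' --- how ``the Durfee side is not a part'' converts into a fixed point after deleting cells --- is precisely the missing idea, and it cannot be resolved by deleting cells from $\nu$ alone. If $\nu \vdash n+1$ has Durfee side $d$ and no part equal to $d$, then $\nu_d \ge d+1$ and $\nu_{d+1} \le d-1$, so the only canonical set of $d$ cells to remove is the $(d+1)$-st column of the Ferrers diagram, which is forced to be full of height exactly $d$. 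But removing it leaves $d$-th part $\nu_d - 1 \ge d$, with equality only when $\nu_d = d+1$, so in general no fixed point is created. Concretely, $\nu = (5,5,5,1) \vdash 16$ has Durfee side $3$ and no part $3$; removing the fourth column gives $(4,4,4,1)$, which has no fixed point at all.

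The paper's resolution is conjugation, packaged in Proposition \ref{ab}: conjugating the Durfee decomposition swaps the leg $\alpha$ (which satisfies $\alpha_1 \le d-1$) with the arm (whose first column is forced to be full), and this shows that $a(n+1)$ equally counts the partitions $\mu \vdash n+1$ with $\mu_d = \mu_{d+1} = d$. After that reformulation the bijection with the diagonal sum is immediate: delete or insert the part $\mu_{d+1} = d$, which changes the size by exactly $d$, as required. In the example above, $\nu' = (4,3,3,3,3)$ has $\mu_3 = \mu_4 = 3$, and deleting the fourth part gives $(4,3,3,3) \in F(13,3)$. Your instinct to ``match the two Durfee decompositions with a controlled modification'' is right in spirit, but the controlled modification is exactly this: delete the forced full column and swap the two pieces flanking the Durfee square --- i.e., conjugate. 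Without identifying that operation, the well-definedness and invertibility that you defer to later checking never materialize, so the proposal as it stands does not constitute a proof.
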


The $n$th row sum automatically gives the total number of partitions of $n$ with a fixed point.  By recent work of the author and James Sellers \cite{hs24}, this connects to the crank and mex as follows.

\begin{thm} \label{row}
For $n \ge 1$,
\[f(n,1) + f(n,2) + \cdots = \# \{ \lambda \vdash n \mid \crank{\lambda} \equiv 0 \bmod 2 \} = \sum_{m \ge 1} M(m,n).\]
\end{thm}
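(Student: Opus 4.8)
The plan is to reduce the row sum to a single partition count and then invoke the crank--mex correspondence alluded to just above the statement. For the reduction, recall from the introduction that, with parts written in nonincreasing order, a partition has at most one fixed point. Hence each partition of $n$ possessing a fixed point is counted in exactly one entry of the $n$th row, the column index being its fixed-point value, while partitions with no fixed point are counted in no entry. Therefore
\[f(n,1)+f(n,2)+\cdots = \#\{\lambda\vdash n \mid \lambda \text{ has a fixed point}\}.\]
I would also record the sharper fact that the fixed-point value must equal the Durfee square size $s$: from $\lambda_d=d$ one gets $d\le s$ because $\lambda_d\ge d$, whereas $d<s$ would force $\lambda_d\ge\lambda_s\ge s>d$; so $d=s$ and the fixed-point partitions are precisely those with $\lambda_s=s$. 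This is the description I would feed into the next step.

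Next I would show the row sum equals the number of partitions of $n$ with even mex. Using the Durfee decomposition, a partition with Durfee square of size $s$ and $\lambda_s=s$ consists of the $s\times s$ square, a partition to its right having at most $s-1$ positive parts, and a partition beneath it with all parts at most $s$; this gives generating function $q^{s^2}/[(q;q)_{s-1}(q;q)_s]$, so the fixed-point partitions are enumerated by $\sum_{s\ge1}q^{s^2}/[(q;q)_{s-1}(q;q)_s]$. On the other hand, a partition with $\mex(\lambda)=m$ contains each of $1,\dots,m-1$, omits $m$, and is otherwise free among parts exceeding $m$, so its class has generating function $q^{\binom m2}(1-q^m)/(q;q)_\infty$; summing over even $m$ and telescoping $q^{\binom m2}(1-q^m)=q^{\binom m2}-q^{\binom{m+1}2}$ yields $\frac{1}{(q;q)_\infty}\sum_{m\ge2}(-1)^m q^{\binom m2}$. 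Equating the two series is then a compact $q$-series identity; I would either verify it directly or, in keeping with the combinatorial spirit of the paper, produce a bijection built from inserting and deleting the staircase $(m-1,\dots,2,1)$ along the diagonal of the Durfee square.

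Finally, the even-mex count is matched with $\sum_{m\ge1}M(m,n)$, the number of partitions of $n$ with positive crank, which is exactly the equidistribution established in \cite{hs24}. This is the step I expect to be the crux: it is the genuinely nontrivial statement linking the minimal excludant to the sign of the crank, and I would import it rather than reprove it. Once it is in hand the theorem follows by chaining the equalities, and as an internal consistency check one may use the crank symmetry $M(m,n)=M(-m,n)$ to rewrite $\sum_{m\ge1}M(m,n)=\tfrac12\bigl(p(n)-M(0,n)\bigr)$ and compare with the generating function assembled above.
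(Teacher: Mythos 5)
Your opening reduction---each partition has at most one fixed point, so the $n$th row sum counts exactly the partitions of $n$ having a fixed point---is precisely how the paper's proof begins, and your sharper observation that the fixed point must occur at the Durfee square size is correct (it is the same fact underlying Proposition \ref{colrecur}). The gap is in your middle step. The paper never proves the equality of the fixed-point count with the even-mex count: it cites \cite[Theorem 2.1]{hs24}, which, as quoted in the paper, delivers the entire chain (fixed point $=$ even mex $=$ positive crank) at once. You instead import only the mex--crank leg from \cite{hs24} and undertake to prove the fixed-point--mex leg yourself. Your two generating functions are both correct---$\sum_{d\ge1} q^{d^2}/\bigl[(q;q)_{d-1}(q;q)_d\bigr]$ for fixed-point partitions (this is Proposition \ref{colrecur} summed over $d$) and $\frac{1}{(q;q)_\infty}\sum_{m\ge2}(-1)^m q^{\binom{m}{2}}$ for even mex---but equating them is not a routine manipulation: that identity \emph{is} the nontrivial content of the result, and ``I would either verify it directly or produce a bijection'' is a plan, not a proof. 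Neither completion is easy; the combinatorial route is essentially the work carried out in \cite{hsy} and \cite{hs24}. So as written, the crux of the theorem is asserted rather than established.

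The repair is immediate: cite \cite[Theorem 2.1]{hs24} for both equalities, which collapses your argument to the paper's two-sentence proof; alternatively, genuinely prove the $q$-series identity, which would make your write-up more self-contained than the paper's. Two smaller points. First, you silently replaced the printed middle term $\#\{\lambda\vdash n \mid \crank\lambda \equiv 0 \bmod 2\}$ by the even-mex count; that reading is surely the intended one (the paper's own proof speaks of even mex, and the even-crank version is false---for $n=4$ all five partitions have even crank while the row sum is $2$), but a careful solution should flag the discrepancy rather than pass over it. Second, your consistency check $\sum_{m\ge1}M(m,n)=\tfrac12\bigl(p(n)-M(0,n)\bigr)$ quietly uses the generating-function convention for $M(m,n)$: with the combinatorial crank, the unique partition of $1$ has crank $-1$, so $\sum_{m\ge1}M(m,1)=0$ while $f(1,1)=1$; the last equality of the theorem at $n=1$ holds only with the convention $M(\pm1,1)=1$, $M(0,1)=-1$.
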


Because, for $n \ge 2$, the $n$th row of the triangle has fewer than $n$ nonzero entries, antidiagonal sums are finite.

\begin{thm} \label{antidiag}
For $n \ge 1$,
\[f(n,1) + f(n+1, 2) + \cdots = p(n-1).\]
\end{thm}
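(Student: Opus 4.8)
The plan is to read the whole antidiagonal as a single count and then build a weight-preserving bijection onto $P(n-1)$. The summands $f(n,1), f(n+1,2), f(n+2,3), \ldots$ count partitions $\lambda$ with fixed point $d$ and $|\lambda| = n+d-1$ as $d$ ranges over $1,2,3,\ldots$, so the left-hand side is exactly the number of partitions that, for \emph{some} $d$, have fixed point $d$ and size $n+d-1$. Since $p(n-1) = \#P(n-1)$, it suffices to exhibit a bijection between this set and $P(n-1)$. Note in passing that a fixed point $d$ forces the Durfee square of $\lambda$ to have size exactly $d$, which is the structural reason the sizes work out.

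First I would record the one structural fact needed: if $\lambda$ has fixed point $d$, then $\lambda_d = d$, and since parts are nonincreasing, $\lambda_{d-1} \ge d \ge \lambda_{d+1}$. The map I propose is to \emph{delete the fixed-point part}, sending $\lambda = (\lambda_1, \ldots, \lambda_{d-1}, d, \lambda_{d+1}, \ldots)$ to $\mu = (\lambda_1, \ldots, \lambda_{d-1}, \lambda_{d+1}, \ldots)$. The displayed inequality shows $\mu$ is a genuine partition, and $|\mu| = |\lambda| - d = (n+d-1) - d = n-1$, so every antidiagonal partition lands in $P(n-1)$ no matter which $d$ produced it; this settles the weight bookkeeping in one stroke.

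The inverse is to \emph{insert} a part: given $\mu \vdash n-1$, pick an index $d$ and splice a new part equal to $d$ into position $d$. To recover a partition with fixed point $d$ I need $\mu_{d-1} \ge d \ge \mu_d$ (using the convention $\mu_0 = \infty$). The heart of the proof — and the step I expect to be the main obstacle — is showing that exactly one index $d$ works. I would argue via monotonicity: set $g(i) = \mu_i - i$, so that $g(i) - g(i-1) = \mu_i - \mu_{i-1} - 1 \le -1$, making $g$ a strictly decreasing integer sequence with $g(0) \ge 1$ and $g(i) \to -\infty$. The required double inequality is equivalent to $g(d-1) \ge 1$ and $g(d) \le 0$, and strict monotonicity forces a single crossing; concretely, $d$ is the least index with $\mu_d \le d$.

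Finally I would verify the two maps are mutually inverse. Inserting into $\mu$ and then deleting is immediate. For the other direction, after deleting the fixed-point part of $\lambda$ one has $\mu_i = \lambda_i \ge \lambda_d = d > i$ for every $i < d$, so $\mu_i \le i$ first fails up to $i = d-1$ and holds at $i = d$ (where $\mu_d = \lambda_{d+1} \le d$); hence the reconstructed insertion index is again $d$, returning $\lambda$. This establishes the bijection and therefore the identity, with the uniqueness of the insertion position — guaranteed by the strict decrease of $\mu_i - i$ — being the only delicate point.
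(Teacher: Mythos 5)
Your proof is correct and is essentially the same bijection as the paper's: delete the fixed-point part $\lambda_d = d$ to land in $P(n-1)$, and invert by inserting a part $d$ at the unique position where $\mu_{d-1} \ge d \ge \mu_d$ (your ``least $d$ with $\mu_d \le d$'' coincides with the paper's ``greatest $d$ with $\mu_{d-1} \ge d$''). Your monotonicity argument for $g(i) = \mu_i - i$ simply spells out the uniqueness of the insertion index, which the paper leaves as ``clear.''
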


In the next section, we address the columns of the $f(n,d)$ triangle and prove Theorem \ref{fixedPas}.  In Section 3, we prove the sum results.  All proofs are combinatorial.

\section{Recurrences in the refined fixed point triangle}

We first consider each column of the triangle, that is, the sequence of $f(n,d)$ values for a fixed $d$ specifying the position of the fixed point.  The next proposition gives the generating function for this sequence.  The statement of the result uses the Pochhammer symbol: For $n\geq 1$, \[(a;q)_n = (1-a)(1-aq)\cdots(1-aq^{n-1}).\] 

\begin{prop} \label{colrecur}
For a fixed $d \ge 1$, the sequence $f(n,d)$ satisfies
\[ \sum_{n \ge 1} f(n,d) \, q^n = \frac{q^{d^2}}{(q;q)_{d-1} (q;q)_d}.\] 
It follows that $f(n,d)$ can be described with a degree $d^2$ linear recurrence relation.
\end{prop}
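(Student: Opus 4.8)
The plan is to identify $F(n,d)$ with a family of partitions described by their Durfee squares and then read off the generating function directly. The key preliminary observation is an equivalence: a partition $\lambda$ has fixed point $d$ if and only if its Durfee square has size exactly $d$ and, in addition, $\lambda_d = d$ (as opposed to $\lambda_d > d$). To see this, note that $\lambda_d = d$ gives $\lambda_d \ge d$, so the Durfee square has size at least $d$; were its size some $D > d$, then $\lambda_D \ge D$ while monotonicity forces $\lambda_D \le \lambda_d = d < D$, a contradiction, so the size is exactly $d$. Conversely, a Durfee square of size $d$ forces $\lambda_d \ge d$, and imposing $\lambda_d = d$ is precisely Definition \ref{fp}. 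Since a partition has at most one fixed point, this pins down $F(n,d)$ completely.

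With this in hand, I would apply the standard Durfee square decomposition. A partition with Durfee square of size $d$ is built from the $d \times d$ square (contributing $q^{d^2}$), the cells to the right of the square forming a partition $\alpha = (\lambda_1 - d, \ldots, \lambda_d - d)$ into at most $d$ parts, and the cells below the square forming a partition $\beta$ whose parts are all at most $d$. The extra condition $\lambda_d = d$ is exactly $\alpha_d = 0$, so $\alpha$ in fact has at most $d - 1$ parts. Partitions into at most $d-1$ parts have generating function $1/(q;q)_{d-1}$ and partitions into parts at most $d$ have generating function $1/(q;q)_d$, so multiplying the three independent contributions gives
\[ \sum_{n \ge 1} f(n,d)\, q^n = \frac{q^{d^2}}{(q;q)_{d-1}(q;q)_d}, \]
as desired.

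For the recurrence, I would note that the denominator $(q;q)_{d-1}(q;q)_d$ is a polynomial with constant term $1$ and degree $\binom{d}{2} + \binom{d+1}{2} = d^2$; write it as $\sum_{k=0}^{d^2} c_k q^k$ with $c_0 = 1$. Clearing the denominator turns the generating function identity into $\bigl(\sum_k c_k q^k\bigr)\bigl(\sum_n f(n,d)\, q^n\bigr) = q^{d^2}$, and comparing coefficients of $q^n$ for each $n > d^2$ (where the right-hand side contributes nothing) yields the homogeneous recurrence $\sum_{k=0}^{d^2} c_k f(n-k,d) = 0$, a linear recurrence of order $d^2$. I expect the only genuinely delicate step to be making the Durfee square characterization of fixed point $d$ fully rigorous; once that equivalence is secured, the generating function is routine Durfee bookkeeping and the recurrence is the standard passage from a rational generating function whose denominator has degree $m$ and nonzero constant term to a linear recurrence of order $m$.
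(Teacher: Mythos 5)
Your proof is correct and takes essentially the same approach as the paper: the Durfee square decomposition of a partition with fixed point $d$ into the $d \times d$ square (giving $q^{d^2}$), a piece with at most $d-1$ rows to its right (giving $1/(q;q)_{d-1}$), and a piece with parts at most $d$ below it (giving $1/(q;q)_d$). The details you add beyond the paper's proof---the rigorous characterization of fixed point $d$ via the Durfee square, and the explicit passage from the rational generating function to an order-$d^2$ linear recurrence---are precisely the steps the paper leaves implicit, and you fill them in correctly.
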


\begin{proof}
A partition $\lambda \in F(n,d)$ has $\lambda_d = d$, which means that its Durfee square is $d \times d$.  Therefore $\lambda$ can be decomposed into three parts: the Durfee square, a partition $\alpha$ below the Durfee square with first part at most $d$, and a partition $\beta'$ to the right of the Durfee square with first part at most $d - 1$ (since $\beta_1 = d$ would make $\lambda_d > d$; see Figure \ref{fig2}).  Since $1/(q;q)_k$ accounts for the partitions with parts at most $k$, the generating function follows.  The linear recurrence statement follows from the fact that $(q;q)_k$ is a polynomial of order $\binom{k+1}{2}$.
\end{proof}

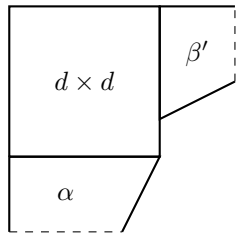
\begin{figure}[h]
\begin{tikzpicture}[scale=0.5]
\draw[thick] (0,0) -- (0,2) -- (4,2) -- (3,0);
\draw[thick] (0,2) -- (0,6) -- (4,6) -- (4,2) -- (0,2);
\draw[thick] (6,4) -- (4,3) -- (4,6) -- (6,6);
\draw[dashed] (6,4) -- (6,6);
\draw[dashed] (0,0) -- (3,0);
\node at (2,4) {$d \times d$};
\node at (1.5,1) {$\alpha$};
\node at (5,4.75) {$\beta'$};
\end{tikzpicture}
\caption{The Durfee square decomposition of a partition $\lambda$ with fixed point $\lambda_d = d$.} \label{fig2}
\end{figure}

The first few columns match sequences recorded in the Online Encyclopedia of Integer Sequences, namely
\[ f(n,2) = 2f(n-1,2) - 2f(n-3,2) + f(n-4,2)\]
with initial values $f(1,2) = f(2,2) = f(3,2) = 0$ and $f(4,2) = 1$ from the partition $(2,2) \vdash 4$ \cite[A002620]{o}, and
\begin{align*} f(n,3) & = 2f(n-1,3) + f(n-2,3) -3f(n-3,3) -f(n-4,3) \\
& \quad + f(n-5,3)  + 3f(n-6,3)  -f(n-7,3) -2f(n-8,3) + f(n-9,3)
\end{align*}
with initial values $f(1,3) = \cdots = f(8,3) = 0$ and $f(9,3) = 1$ from the partition $(3,3,3) \vdash 9$ \cite[A097701]{o}.

Examining an expanded version of the $f(n,d)$ triangle of Figure \ref{fig1} suggests that the columns `stabilize' to the sequence $1, 2, 5, 10, 20, \dots$, the convolution of $p(n)$ values $\sum_{i = 0}^n p(i) p(n-i)$ \cite[A000712]{o}.  More specifically, the first $d^2+d-1$ values of $f(n,d)$ are given by the following proposition. 

\begin{prop} \label{conv}
For a fixed $d \ge 1$, the sequence $f(n,d)$ satisfies $f(1,d) = \cdots = f(d^2-1,d) = 0$ and, for $0 \le k \le d-1$, 
\[f(n,d^2 + k) = \sum_{i = 0}^k p(i) p(k-i).\]
\end{prop}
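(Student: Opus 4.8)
The plan is to extract the first few column values directly from the Durfee square decomposition used to prove Proposition~\ref{colrecur}. Recall from that argument that a partition in $F(n,d)$ consists of a $d\times d$ Durfee square together with a partition $\alpha$ below it with parts at most $d$ and a partition $\beta'$ to its right with parts at most $d-1$, so that $n = d^2 + |\alpha| + |\beta'|$. Two immediate consequences follow. First, $|\lambda| \ge d^2$ for every $\lambda \in F(n,d)$, so $F(n,d)$ is empty and $f(n,d)=0$ for all $n < d^2$, establishing $f(1,d)=\cdots=f(d^2-1,d)=0$. Second, for $k \ge 0$ the quantity $f(d^2+k,d)$ counts exactly the pairs $(\alpha,\beta')$ with $|\alpha|+|\beta'| = k$ subject to those two part-size caps. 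Equivalently, $f(d^2+k,d)$ is the coefficient of $q^k$ in $1/\big((q;q)_{d-1}(q;q)_d\big)$, by Proposition~\ref{colrecur}.

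Writing $p_m(a)$ for the number of partitions of $a$ into parts of size at most $m$, the pair count above is $f(d^2+k,d) = \sum_{i=0}^{k} p_d(i)\,p_{d-1}(k-i)$. Next I would restrict to $0 \le k \le d-1$ and observe that both caps are then vacuous: in each summand the arguments $i$ and $k-i$ are at most $k \le d-1$, and a partition of any integer $a \le d-1$ already has all of its parts at most $a \le d-1$. Hence $p_d(i) = p(i)$ and $p_{d-1}(k-i) = p(k-i)$ throughout this range, and the convolution collapses to $\sum_{i=0}^{k} p(i)\,p(k-i)$, which is the claimed formula. The base case $k=0$ serves as a sanity check, returning $p(0)^2 = 1$, the single partition $(d,d,\dots,d)$ that fills the square exactly.

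The point deserving the most care is not the collapse itself but verifying that $d-1$ is the exact threshold, which also explains the count of $d^2+d-1$ stabilized values asserted before the proposition. The binding restriction is the cap of $d-1$ on the parts of $\beta'$: the smallest partition it forbids is the single part $(d)$, so $p_{d-1}(a) = p(a)$ persists for $a \le d-1$ but drops by one at $a=d$. Thus at $k=d$ the term with $k-i = d$, namely $i=0$, already gives $p_{d-1}(d) = p(d)-1 < p(d)$, so $f(d^2+d,d)$ falls one short of $\sum_{i=0}^{d}p(i)p(d-i)$; this is precisely why the identity cannot be pushed past $k=d-1$. Making this boundary explicit both confirms the stated range and matches the data in Figure~\ref{fig1} (for instance $f(6,2)=4$, while the full convolution would give $5$).
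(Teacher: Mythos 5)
Your proof is correct and takes essentially the same route as the paper's: both rest on the Durfee square decomposition underlying Proposition~\ref{colrecur} (square of size $d^2$, plus $\alpha$ below and $\beta'$ to the right with their part-size caps) together with the observation that for $k \le d-1$ those caps are vacuous, so the count is the unrestricted convolution $\sum_{i=0}^{k} p(i)p(k-i)$. Your closing analysis of the threshold $k=d$, where the single excluded configuration is $\beta=(d)$, matches the paper's own remark immediately following the proposition.
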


\begin{proof}
First, note that $f(n,1) = 1$ for all $n \ge 1$ from the partition $1^n \vdash n$.  (Recall that, for a partition, an exponent denotes repetition.)

For $d \ge 2$, the smallest partition with a fixed point at $d$ is $d^d \vdash d^2$, thus $f(n,d) = 0$ for $n \le d^2 - 1$.

For $0 \le k \le d-1$, a partition in $F(d^2 + k, d)$ has Durfee square $d \times d$ and $k$ additional dots in its Ferrers diagram.  These $k$ dots can be split between the partitions below and to the right of the Durfee square, i.e., in the notation of Figure \ref{fig2}, $|\alpha| + |\beta| = k$.  Since $k < d$, these dots outside the Durfee square cannot affect the fixed point.  Therefore any pair of partitions $\alpha, \beta$ with $\alpha \vdash i$ and $\beta \vdash k - i$ for $0 \le i \le k$ gives a partition of $d^2 + k$ with fixed point $d$.
\end{proof}

One could extend this result using the convolution of $p(n)$ values by carefully counting exceptions (e.g., using the notation of Proposition \ref{conv} and its proof, the one case when $k=d$ that affects the fixed point is when $\alpha$ is the empty partition and $\beta = (d)$), but that approach soon becomes complicated.  

Instead, we improve the order $d^2$ linear recurrence for $f(n,d)$ provided in Proposition \ref{colrecur} by Theorem \ref{fixedPas} which expresses $f(n,d)$ in terms of four values above and left of it in the triangle, namely
\[f(n,d) = f(n-d+1,d) + f(n-d,d) - f(n-2d + 1, d) + f(n-2d+1,d-1).\]

\begin{proof}[Proof of Theorem \ref{fixedPas}]
We establish a bijection
\[F(n,d) \cup F(n-2d+1,d) \cong F(n-d+1,d) \cup F(n-d,d) \cup F(n-2d+1,d-1).\]

Given $\lambda \in F(n-2d+1,d-1)$, increase the first $d-2$ parts by one, add a part $d$, and increase the fixed point $\lambda_{d-1}$ by one.  That is, 
\[ (\lambda_1, \ldots, \lambda_{d-2}, d-1, \lambda_d, \ldots) \mapsto \mu = (\lambda_1+1, \ldots, \lambda_{d-2} + 1, d, d, \lambda_d, \ldots).\]
The image $\mu$ is a partition of $n-2d+1+(d-2)+d+1 = n$ with $\mu_d = d$, thus an element of $F(n,d)$.  Note also that $\mu_{d-1} = d$ and $\mu_{d+1} < d$. 

There are two cases for the image of a partition $\lambda \in F(n-d,d)$.  

First, if $\lambda_{d-1} = d$ (in addition to the fixed point $\lambda_d = d$), then add another part $d$ to make a partition $\mu \in F(n,d)$ with $\mu_{d-1} = \mu_d = \mu_{d+1} = d$.  

Second, if $\lambda_{d-1} > d$, then decrease the first $d-1$ parts by one to make a partition $\mu \in F(n-2d+1,d)$.

Given $\lambda \in F(n-d+1,d)$, increase the first $d-1$ parts by one to make a partition $\mu \in F(n,d)$.  Note that $\mu_{d-1} > d$.

As noted, the images of $F(n-2d+1,d-1)$, of $F(n-d+1,d)$, and from $F(n-d,d)$ into $F(n,d)$ are all distinct.

For the reverse map, given $\mu \in F(n-2d+1,d)$, increase the first $d-1$ parts by one to make a partition $\lambda \in F(n-d,d)$.  Note that $\lambda_{d-1} > d$.

There are three cases for the image of a partition $\mu \in F(n,d)$.  

First, if $\mu_{d-1} > d$, then decrease the first $d-1$ parts by one to make a partition $\lambda \in F(n-d+1,d)$.  

Second, if $\mu_{d-1} =  \mu_{d+1} = d \; (= \mu_d)$, then remove $\mu_{d+1}=d$ to make a partition $\lambda \in F(n-d,d)$. Note that $\lambda_{d-1} = d$.  

Third, if $\mu_{d-1} = d$ and $\mu_{d+1}<d$, then decrease the first $d-2$ parts by one, remove $\mu_{d-1} = d$, and decrease the fixed point $\mu_d = d$ by one, i.e.,
\[ (\mu_1, \ldots, \mu_{d-2}, d, d, \mu_{d+1}, \ldots) \mapsto \lambda = (\mu_1 -1, \ldots, \mu_{d-2}-1, d-1, \mu_{d+1}, \ldots).\]
The image $\lambda$ is a partition of $n-(d-2)-d-1 = n-2d+1$ with $\lambda_{d-1} = d-1$, thus an element of $F(n-2d+1,d-1)$.

As noted, the images of $F(n-2d+1,d)$ and from $F(n,d)$ into $F(n-d,d)$ are distinct.  

It is clear that the two maps are inverses, establishing the bijection.
\end{proof}

For example, Table \ref{ex0} shows the correspondence between $F(7,2) \cup F(4,2)$ and $F(6,2) \cup F(5,2) \cup F(4,1)$.

\begin{table}[h]
\renewcommand{\arraystretch}{1.2}
\begin{tabular}{r|ccccccc}
$F(7,2) \cup F(4,2)$ & 52 & 421 & 322 & 3211 & 2221 & 22111 & 22 \\ \hline
$F(6,2) \cup F(5,2) \cup F(4,1)$ & 42 & 321 & 222 & 2211 & 221 & 1111 & 32
\end{tabular}
\caption{Example of the bijection of proof of Theorem \ref{fixedPas}.} \label{ex0}
\end{table}

\section{Sums in the refined fixed point triangle}

We now prove the various sum results related to the $f(n,d)$ triangle.  Recall that $a(n)$ is the number of partitions $\lambda \vdash n$ such that the size of the Durfee square of $\lambda$ is not a part of $\lambda$.  First, we establish the generating function for $a(n)$ and show that the sequence counts a different family of partitions.

\begin{prop} \label{ab}
The sequence $a(n)$ satisfies
\[ \sum_{n \ge 1} a(n) \, q^n = \sum_{d \ge 1} \frac{q^{d^2+d}}{(q;q)_{d-1} (q;q)_d}.\]
Also, $a(n)$ counts the partitions $\mu$ of $n$ for which $\mu_d = \mu_{d+1} = d$, i.e., the $\mu \in F(n,d)$ with the additional constraint that $\mu_{d+1} = d$.
\end{prop}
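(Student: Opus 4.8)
The plan is to prove the two assertions by separate combinatorial arguments: the generating function via a Durfee square decomposition in the style of Proposition~\ref{colrecur}, and the reinterpretation via the conjugation involution. Throughout, I call a partition counted by $a(n)$ (one whose Durfee square size is not a part) an \emph{$a$-type} partition.

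First I would translate the defining condition into conditions on the parts. If $\lambda \vdash n$ has Durfee square of size $d$, then $\lambda_d \ge d$ and $\lambda_{d+1} \le d$; the size $d$ fails to be a part exactly when $\lambda_d > d$ (which forces all of $\lambda_1, \dots, \lambda_d$ to exceed $d$) and $\lambda_{d+1} < d$ (which forces every later part below $d$). Decomposing $\lambda$ around its square as in Figure~\ref{fig2}, the piece to the right now has all $d$ of its rows nonempty, since $\lambda_d - d \ge 1$, so it is a partition into exactly $d$ positive parts with generating function $q^d/(q;q)_d$, while the piece below has parts at most $d-1$ with generating function $1/(q;q)_{d-1}$. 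Accounting for the square with $q^{d^2}$ and summing over $d \ge 1$ gives
\[\sum_{d \ge 1} q^{d^2}\cdot\frac{q^d}{(q;q)_d}\cdot\frac{1}{(q;q)_{d-1}} = \sum_{d \ge 1}\frac{q^{d^2+d}}{(q;q)_{d-1}(q;q)_d},\]
which is the claimed formula.

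For the second statement I would show that conjugation $\lambda \mapsto \lambda'$ restricts to a bijection from the $a$-type partitions onto the partitions $\mu$ with $\mu_d = \mu_{d+1} = d$. Conjugation preserves the Durfee size $d$, and $\lambda'_k$ equals the number of parts of $\lambda$ that are at least $k$. If $\lambda$ is $a$-type, so that $\lambda_d > d$ and $\lambda_{d+1} < d$, then the $d$ parts $\lambda_1, \dots, \lambda_d$ are exactly those that are $\ge d$, and they are also exactly those that are $\ge d+1$; hence $\lambda'_d = \lambda'_{d+1} = d$. Conversely, if $\mu_d = \mu_{d+1} = d$, then at least the $d+1$ parts $\mu_1, \dots, \mu_{d+1}$ are $\ge d$ while at most the $d-1$ parts $\mu_1, \dots, \mu_{d-1}$ are $\ge d+1$, so $\mu'_d > d$ and $\mu'_{d+1} < d$, making $\mu'$ of $a$-type. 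Since conjugation is an involution, these two implications together give the bijection.

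The generating-function bookkeeping is routine once the reformulation is in place, so I expect the only real point of care to be the clean translation of ``the Durfee size is not a part'' into the pair of inequalities $\lambda_d > d$ and $\lambda_{d+1} < d$, matched against the dual fact that conjugation turns these very inequalities into the equalities $\mu_d = \mu_{d+1} = d$. The main obstacle is thus expository rather than conceptual: handling the boundary indices $d$ and $d+1$ consistently on both sides of the conjugation, since an off-by-one there would simultaneously corrupt the decomposition count and the involution.
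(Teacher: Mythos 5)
Your proof is correct and follows essentially the same route as the paper: a Durfee square decomposition (with the right-hand piece having exactly $d$ positive rows, which the paper phrases equivalently via the first column forming a $d \times (d+1)$ rectangle) for the generating function, and conjugation for the second claim. The only difference is expository---your verification of the conjugation step via counting parts $\ge k$ is slightly more detailed than the paper's appeal to swapping $\alpha$ and $\beta$ in the Durfee decomposition.
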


\begin{proof}
The generating function argument is much like the proof of Proposition \ref{colrecur}.  Suppose $\lambda$ has Durfee square $d \times d$.  In order to not have $d$ as a part, it must be the case that $\lambda_d \ge d+1$ and $\lambda_{d+1} \le d-1$.  Referencing $\alpha$ and $\beta$ in the left-hand side of Figure \ref{fig3}, that means $\alpha_1 \le d-1$ and $\alpha$, thus the $1/(q;q)_{d-1}$ term.  Also, $\beta_1 = d$ which accounts for the additional $d$ in the exponent of $q$ in the numerator---this makes a $d \times (d+1)$ rectangle, called a 1-Durfee rectangle by the author, Sellers, and Yee \cite{hsy}.  The remaining dots to the right of the 1-Durfee rectangle have first part at most $d$, thus the $1/(q;q)_d$ term.

The other family of partitions counted by $a(n)$ follows by conjugation.  Given $\lambda$ counted by $a(n)$, let $\mu = \lambda'$.  Conjugation fixes the Durfee square, swaps $\alpha$ for $\beta$, and swaps $\beta'$ for $\alpha'$.  Since $\alpha_1 \le d-1$, now $\alpha'$ to the right of the Durfee squares leaves $\mu_d = d$.  Also, since $\beta_1 = d$, now $\beta$ below the Durfee square gives $\mu_{d+1} = d$.
\end{proof}

\begin{figure}[t]
\begin{tikzpicture}[scale=0.5]
\draw[thick] (0,0) -- (0,2) -- (3,2) -- (2,0);
\draw[thick] (0,2) -- (0,6) -- (4,6) -- (4,2) -- (0,2);
\draw[thick] (6,3) -- (5,2) -- (4,2) -- (4,6) -- (6,6);
\draw[dashed] (6,3) -- (6,6);
\draw[dashed] (0,0) -- (2,0);
\node at (2,4) {$d \times d$};
\node at (1.25,1) {$\alpha$};
\node at (5,4) {$\beta'$};
\end{tikzpicture}
\qquad \qquad
\begin{tikzpicture}[scale=0.5]
\draw[thick] (0,0) -- (0,2) -- (4,2) -- (4,1)--(3,0);
\draw[thick] (0,2) -- (0,6) -- (4,6) -- (4,2) -- (0,2);
\draw[thick] (6,4) -- (4,3) -- (4,6) -- (6,6);
\draw[dashed] (6,4) -- (6,6);
\draw[dashed] (0,0) -- (3,0);
\node at (2,4) {$d \times d$};
\node at (1.75,1) {$\beta$};
\node at (5,4.75) {$\alpha'$};
\end{tikzpicture}
\caption{On the left-hand side, a partition with a $d \times d$ Durfee square and no part $d$.  On the right-hand side, the conjugate of that partition.} \label{fig3}
\end{figure}
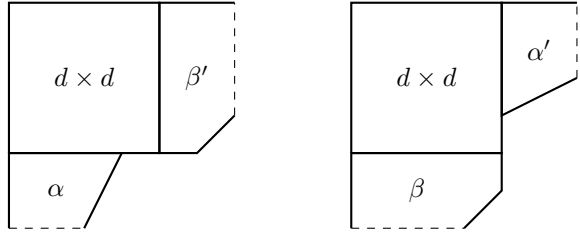

We can now prove the diagonal sum result.

\begin{proof}[Proof of Theorem \ref{diag}]
We establish a bijection
\[F(n,1) \cup F(n-1, 2) \cup F(n-2,3) \cup \cdots \cong B(n+1)\]
where $B(n+1)$ consists of the $\mu \in F(n+1,d)$ with $\mu_{d+1} = d$ also.  We know by Proposition \ref{ab} that these are counted by the sequence $a(n)$, as desired.

Given $d \ge 1$ and $\lambda \in F(n+1-d,d)$, let 
\[\mu = (\lambda_1, \ldots, \lambda_d-1, \lambda_d, d, \lambda_{d+1}, \ldots),\] 
i.e., insert an additional part $d$ as the new $(d+1)$st part of $\mu$.  The resulting $\mu$ is a partition of $n+1$ with $\mu_d = \mu_{d+1} = d$ as required.  

The reverse map is clear: Given $\mu \in B(n+1)$, remove the part $\mu_{d+1} = d$ to give a partition in $F(n+1-d,d)$.
\end{proof}

For example, Table \ref{ex1} shows the correspondence between $F(8,1) \cup F(7,2)$ and $B(9)$.  (Note that $3^3 \notin B(9)$ since the definition requires $\mu_3 = \mu_4 = 3$ while $(3,3,3)$ has only three parts.)

\begin{table}[h]
\renewcommand{\arraystretch}{1.2}
\begin{tabular}{r|ccccccc}
$F(8,1) \cup F(7,2)$ & $1^8$ & 52 & 421 & 322 & 3211 & 2221 & 22111 \\ \hline
$B(9)$ & $1^9$ & 522 & 4221 & 3222 & 32211 & 22221 & 222111
\end{tabular}
\caption{Example of the bijection of proof of Theorem \ref{diag}.} \label{ex1}
\end{table}

In Theorem \ref{row}, the row sums of the $f(n,d)$ triangle connect to the crank and mex statistics as follows.

\begin{proof}[Proof of Theorem \ref{row}]
Clearly the row sums $\sum_{k \ge 1} f(n,k)$ give the total number of partitions of $n$ that have a fixed point.  The author and Sellers have shown that this count matches the partitions of $n$ with even mex and also the number of partitions of $n$ with positive crank \cite[Theorem 2.1]{hs24}.
\end{proof}

For our last proof, recall that the $f(n,d)$ has finite antidiagonal sums as one can show that row $n$ has $\lfloor \sqrt{n} \rfloor$ nonzero entries (where $\lfloor \cdot \rfloor$ denotes the integer floor function).  The antidiagonal sums are the well known sequence $p(n)$.

\begin{proof}[Proof of Theorem \ref{antidiag}]
We establish a bijection
\[F(n,1) \cup F(n+1, 2) \cup F(n+2,3) \cup \cdots \cong P(n-1).\]

Given $d \ge 1$ and $\lambda \in F(n-1+d,d)$, let 
\[\mu = (\lambda_1, \ldots, \lambda_d-1, \lambda_{d+1}, \ldots),\] 
i.e., remove the fixed point.  The resulting $\mu$ is a partition of $n-1$.

For the reverse map, given $\mu \in P(n-1)$, find the greatest $d$ such that $\mu_{d-1} \ge d$ (this is a parameter for the 1-Durfee square of $\mu$, which can have size $0 \times 1$).
Let
\[\lambda = (\mu_1, \ldots, \mu_{d-1}, d, \mu_d, \ldots),\]
i.e., insert $\lambda_d = d$.  This $\lambda$ is a partition of $n-1+d$ with fixed point $d$, so $\lambda \in F(n-1+d,d)$.  

It is clear that these are inverse maps.
\end{proof}

For example, Table \ref{ex2} shows the correspondence between $F(7,1) \cup F(8,2) \cup F(9,3)$ and $P(6)$.

\begin{table}[h]
\renewcommand{\arraystretch}{1.2}
\begin{tabular}{r|ccccccccccc}
$F(7,1) \cup F(8,2) \cdots$ & $1^7$ & 62 & 521 & 422 & 4211 & 3221  \\ \hline
$P(6)$ & $1^6$ & 6 & 51 & 42 & 411 & 321  \\ \hline \hline
$\cdots F(8,2) \cup F(9,3)$ & 32111 & 2222 & 22211 & $221^6$ & 333 \\ \hline
$P(6)$ & 3111 & 222 & 2211 & 21111 & 33
\end{tabular}
\caption{Example of the bijection of proof of Theorem \ref{antidiag}.} \label{ex2}
\end{table}

\end{document}